\numberwithin{equation}{section}
\newtheoremstyle{personal}%
{12pt}
{12pt}
{\slshape}
{}
{\bfseries}
{.}
{.5em}
{}
\theoremstyle{personal}%
\newtheorem{thm}{Theorem}[section]
\newtheorem{claim}{Claim}
\newtheorem{lem}[thm]{Lemma}
\newtheorem{prop}[thm]{Proposition}
\theoremstyle{definition}
\newtheorem{defn}{Definition}[section]
\newcommand{\N}{\mathds{N}}
\newcommand{\Z}{\mathds{Z}}
\newcommand{\R}{\mathds{R}}
\newcommand{\F}{\mathds{F}}
\newcommand{\Q}{\mathds{Q}}
\newcommand{\diff}{\mathrm{d}}
\newcommand{\dist}{\mathrm{dist}}
\newcommand{\Tan}{\mathrm{T}}
\DeclareMathOperator{\grad}{\mathrm{grad}}
\newcommand{\Hom}{\mathrm{H}}
\newcommand{\Loc}{\mathrm{C}}
\newcommand{\cp}{\mathrm{cp}}
\newcommand{\avind}{\overline{\ind}}
\newcommand{\ind}{\mathrm{ind}}
\newcommand{\nul}{\mathrm{nul}}
\newcommand{\W}{W^{1,2}}
\begin{document}

\title[Closed geodesics on non-compact manifolds]{On the existence of infinitely many\\ closed geodesics on non-compact manifolds}
\author{Luca Asselle}
\address{Ruhr Universit\"at Bochum, Fakult\"at f\"ur Mathematik\newline\indent Geb\"aude NA 4/33, D-44801 Bochum, Germany}
\email{luca.asselle@ruhr-uni-bochum.de}
\author{Marco Mazzucchelli}
\address{CNRS, \'Ecole Normale Sup\'erieure de Lyon, UMPA\newline\indent  69364 Lyon Cedex 07, France}
\email{marco.mazzucchelli@ens-lyon.fr}
\date{February 11, 2016. \emph{Revised}: July 22, 2016}
\subjclass[2000]{53C22, 58E10}
\keywords{closed geodesics, Morse theory, free loop space}

\begin{abstract}
We prove that any complete (and possibly non-compact) Riemannian manifold $M$ possesses infinitely many closed geodesics provided its free loop space has unbounded Betti numbers in degrees larger than $\dim(M)$, and there are no close conjugate points at infinity. Our argument builds on an existence result due to Benci and Giannoni, and generalizes the celebrated theorem of Gromoll and Meyer for closed manifolds.
\end{abstract}

\maketitle

\section{Introduction}

The closed geodesics conjecture in Riemannian geometry and Hamiltonian dynamics states that every closed Riemannian manifold of dimension larger than one possesses infinitely many closed geodesics. Before the 1960s, it was not even clear whether there could exist simply connected closed manifolds possessing infinitely many closed geodesics for any choice of a Riemannian metric. The celebrated theorem of Gromoll and Meyer \cite[Theorem~4]{Gromoll:1969gh} motivated the formulation of the closed geodesics conjecture, by confirming its validity for the class of simply connected closed manifolds $M$ whose free loop space has unbounded Betti numbers in some coefficient field $\F$. A result from rational homotopy theory due to Vigu\'e-Poirrier and Sullivan \cite{Vigue-Poirrier:1976ug} implies that such condition on the loop space homology with $\F=\Q$ is equivalent to the fact that the cohomology algebra $\Hom^*(M;\Q)$ require at least two generators. Among the spaces for which the closed geodesics conjecture is still open there are the $n$-dimensional spheres for $n>2$ (the case of $n=2$ being known from a combination of results due to Bangert \cite{Bangert:1993wo}, Franks \cite{Franks:1992jt} and Hingston \cite{Hingston:1993ou}).

Non-compact Riemannian manifolds may not have closed geodesics at all. This is the case, for instance, for the flat Euclidean spaces. Consequently, any existence and multiplicity result must impose further conditions on the topology of the manifold and on its geometry at infinity. In \cite{Bangert:1980ho}, Bangert proved that every complete Riemannian surface that has finite area and is homeomorphic to a plane, a cylinder, or a Mobi\"us band, possesses infinitely many closed geodesics. In \cite{Thorbergsson:1978la}, Thorbergsson proved the existence of a closed geodesic in any complete and non-contractible Riemannian manifold with non-negative sectional curvature outside a compact set. In \cite{Benci:1991kk, Benci:1992lq}, Benci and Giannoni proved with different techniques a variation of Thorbergsson's result: any complete $d$-dimensional Riemannian manifold possesses a closed geodesic provided the limit superior of its sectional curvature at infinity is non-positive and the homology of its free loop space is non-trivial in some degree larger than $2d$. 

In this short paper, we build on Benci-Giannoni's and Gromoll-Meyer's arguments to study closed geodesics in a class of complete and not necessarily compact Riemannian manifolds. The condition that we will require on the geometry of our Riemannian manifold at infinity is the following one.

\begin{defn}\label{d:close_conj_pts}
A Riemannian manifold $(M,g)$ is said to be \textbf{without close conjugate points at infinity} when for all $\ell>0$ there exists a compact subset $K_\ell\subset M$ such that, for every geodesic $\gamma:[0,1]\to M\setminus K_\ell$ of length at most $\ell$, the point $\gamma(0)$ has no conjugate points along $\gamma$.
\end{defn}

We refer the reader to Section~\ref{ss:Morse_idx} for the background on the classical notion of conjugate points from Riemannian geometry. The condition in Definition~\ref{d:close_conj_pts} is satisfied for instance if $(M,g)$ has no conjugate points outside a compact set, and is milder than the geometric assumption in Benci-Giannoni's result: if the limit superior of the sectional curvature of $(M,g)$ at infinity is non-positive, then $(M,g)$ is without close conjugate points at infinity; this follows from Rauch comparison theorem, see Proposition~\ref{p:sectional_curvature}.

In \cite[Remark~1.4]{Benci:1992lq}, Benci and Giannoni speculated that their existence result above is still valid under a weaker condition on the free loop space homology: it is enough for such homology to be non-trivial in some degree larger than $d$. Our first result confirms their claim.

\begin{thm}\label{t:existence}
Let $(M,g)$ be a complete Riemannian manifold without close conjugate points at infinity, whose free loop space has nontrivial homology in some degree larger than $\dim(M)$ for some coefficient field. Then $(M,g)$ possesses at least a closed geodesic.
\end{thm}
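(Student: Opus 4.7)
The plan is to apply minimax methods to the energy functional $E:\Lambda M\to\R$ defined on the Hilbert manifold $\Lambda M=W^{1,2}(\T,M)$ of free loops, whose non-constant critical points are exactly the closed geodesics of $(M,g)$. Writing $d:=\dim(M)$, the hypothesis furnishes a non-trivial homology class $\alpha\in\Hom_k(\Lambda M;\F)$ in some degree $k>d$. The natural candidate critical value is the minimax
\[
c_\alpha\ :=\ \inf_{[z]=\alpha}\ \max_{x\in\mathrm{supp}(z)}\,E(x),
\]
taken over singular cycles $z$ representing $\alpha$. This is a positive real number, and the goal is to prove that it is a critical value of $E$, thereby producing a closed geodesic.

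The main obstacle is that the Palais--Smale condition may fail for $E$ on a non-compact $M$: a Palais--Smale sequence at level $c_\alpha$ can have loops escaping every compact subset of $M$, and so no closed geodesic is immediately extracted. To rule this out, I would use an index estimate at infinity drawn from Definition~\ref{d:close_conj_pts}. Fix $\ell>\sqrt{2c_\alpha}$, large enough that every loop of energy at most $c_\alpha$ has length below $\ell$, and let $K_\ell$ be the associated compact set. If a closed geodesic $\gamma:[0,1]\to M\setminus K_\ell$ has length at most $\ell$, then $\gamma(0)$ has no conjugate points along $\gamma$; a short argument with the Jacobi equation then shows that the sum of the Morse index and the nullity of $\gamma$ as a critical point of $E$ cannot exceed $d$, because Jacobi fields vanishing at $\gamma(0)$ cannot vanish again and hence cannot be periodic. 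Escaping Palais--Smale sequences therefore behave like ``critical points at infinity'' of total Morse-theoretic size at most $d$.

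Using this estimate, I would construct a pseudo-gradient vector field $X$ for $E$ with complete forward flow, which coincides with a small perturbation of $-\grad E$ over loops whose image meets a fixed large compact part of $M$, and which, on loops escaping to infinity, is built via a cut-off and the exponential map of $g$ from the natural retraction of short conjugate-point-free loops in $M\setminus K_\ell$ onto the submanifold of constants. Running the standard minimax deformation lemma for the flow of $X$: if $c_\alpha$ were not a critical value of $E$, then every cycle representing $\alpha$ could be pushed strictly below $c_\alpha$, the only topological cost being carried by the retraction at infinity, which modifies cells of dimension at most $d$. Since $k>d$, the class $\alpha$ survives this deformation and remains non-trivial below $c_\alpha$, contradicting the definition of $c_\alpha$.

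The delicate step is the construction of the pseudo-gradient $X$ together with the precise analysis of its topological effect at infinity: one must simultaneously ensure completeness of the flow, genuine decrease of $E$ on the compact part of $\Lambda M$, and an index-theoretic bound on the homological contribution of the escaping part. The sharpening from Benci and Giannoni's threshold $k>2d$ to the present $k>d$ ultimately reflects the fact that Definition~\ref{d:close_conj_pts} is strictly stronger than an asymptotic sign condition on the sectional curvature, and forces the negative part of the second variation at infinity to vanish altogether rather than merely be controlled in dimension.
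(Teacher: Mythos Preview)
Your proposal has a genuine gap at precisely the point you yourself flag as ``delicate'': the construction of a pseudo-gradient $X$ that is complete, strictly decreases $E$ on the compact part, and whose behaviour at infinity has a topological effect bounded in dimension by $d$. You do not carry this out, and it is not clear it can be done as described; the ``retraction of short conjugate-point-free loops onto constants'' is not an $E$-decreasing deformation, and patching it with a gradient-like field without destroying the minimax property requires an argument you have not supplied. Invoking ``critical points at infinity'' is a label, not a proof.

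The paper sidesteps this entirely via Benci--Giannoni's penalization: replace $E$ by $E_\alpha(\gamma)=E(\gamma)+f_\alpha(\gamma(0))$, where $f_\alpha$ is a non-negative proper function supported at distance more than $\ell$ from $K_\ell$. This restores the Palais--Smale condition, so the minimax value $c_\alpha(h)$ is a genuine critical value of $E_\alpha$. Any critical point $\gamma$ of $E_\alpha$ below level $\ell^2$ with $\gamma(0)\in\mathrm{supp}(f_\alpha)$ lies entirely in $M\setminus K_\ell$, hence satisfies $\cp_1(\gamma(0),\dot\gamma(0^+))=0$, and Lemma~\ref{l:Morse_index_bound} gives $\ind(E_\alpha,\gamma)+\nul(E_\alpha,\gamma)\le d$. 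If no critical point of $E_\alpha$ below $\ell^2$ were a true closed geodesic, standard Morse theory would force $\Hom_k(\{E_\alpha<\ell^2\})=0$ for $k>d$, contradicting the survival of $h$. The penalization thus turns your heuristic ``critical points at infinity'' into honest critical points of a Palais--Smale functional, to which the index lemma applies directly.

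Two further corrections. First, your sketch of the index bound (``Jacobi fields vanishing at $\gamma(0)$ cannot vanish again and hence cannot be periodic'') is not the right reasoning: periodic Jacobi fields need not vanish anywhere. The actual mechanism (Lemma~\ref{l:Morse_index_bound}) splits $\Tan_\gamma(\Lambda M)$ into the based-loop directions $\Tan_\gamma(\Omega M)$, on which the Hessian has index zero by the Morse index theorem, and their $H$-orthogonal complement, consisting of Jacobi fields with $\xi(0)=\xi(1)$; the absence of conjugate points makes $\xi\mapsto\xi(0)$ injective on the latter, bounding its dimension by $d$. Second, your final sentence is factually wrong: Definition~\ref{d:close_conj_pts} is \emph{weaker}, not stronger, than Benci--Giannoni's curvature assumption (Proposition~\ref{p:sectional_curvature}). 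The sharpening from $k>2d$ to $k>d$ comes from the index estimate of Lemma~\ref{l:Morse_index_bound}, not from a strengthened hypothesis.
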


Our second result is an extension of Gromoll and Meyer's theorem to a class of possibly non-compact Riemannian manifolds, that is, a confirmation of the closed geodesics conjecture for this class.

\begin{thm}\label{t:multiplicity}
Let $(M,g)$ be a complete, connected, Riemannian manifold without close conjugate points at infinity, whose free loop space has unbounded Betti numbers in degrees larger than $\dim(M)$ for some coefficient field. Then $(M,g)$ possesses infinitely many closed geodesics.
\end{thm}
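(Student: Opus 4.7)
The plan is to run the Gromoll--Meyer strategy on the free loop space $\Lambda M$, arguing by contradiction and using the existence-type machinery behind Theorem~\ref{t:existence} to compensate for the failure of the Palais--Smale condition at infinity. Suppose that $(M,g)$ admits only finitely many geometrically distinct closed geodesics $\gamma_1,\dots,\gamma_r$. Each $\gamma_i$ is supported in a compact subset of $M$, hence so are all its iterates $\gamma_i^m$, and the critical set of the energy functional $E:\Lambda M\to[0,\infty)$ is the disjoint union of the constants (a copy of $M$) and the $S^1$-orbits $S^1\cdot\gamma_i^m$ for $i=1,\dots,r$ and $m\geq 1$.

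The first half of the argument is purely local on $\Lambda M$ and therefore unaffected by the non-compactness of $M$. Gromoll--Meyer's local homology lemma bounds the dimension of each $H_d^{\mathrm{loc}}(S^1\cdot\gamma_i^m;\F)$ by a constant $C=C(\dim M)$, with support concentrated in the window $[\ind(\gamma_i^m),\ind(\gamma_i^m)+\nul(\gamma_i^m)]$. Bott's iteration theorem then yields the asymptotic $\ind(\gamma_i^m)=m\,\avind(\gamma_i)+O(1)$ together with the uniform nullity bound $\nul(\gamma_i^m)\leq 2\dim(M)-1$. Separating the finitely many $\gamma_i$ with $\avind(\gamma_i)=0$, whose iterates only contribute in a bounded range of degrees, from those with $\avind(\gamma_i)>0$, for which at each degree only $O(1)$ iterates can contribute, one obtains a constant $B$ and a threshold $d_0$ such that
\[
\sum_{i,m}\dim H_d^{\mathrm{loc}}(S^1\cdot\gamma_i^m;\F)\leq B\qquad\text{for every } d\geq d_0.
\]

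The crux of the proof is then to promote this local count to the genuine Morse inequality
\[
b_d(\Lambda M;\F)\leq \sum_{i,m}\dim H_d^{\mathrm{loc}}(S^1\cdot\gamma_i^m;\F)\qquad\text{for all } d>\dim(M),
\]
which in the compact case is classical but in our setting must cope with pseudo-gradient trajectories possibly escaping to infinity. I plan to follow the Benci--Giannoni deformation scheme underlying Theorem~\ref{t:existence}: exhaust $\Lambda M$ by sublevels $\Lambda^cM:=\{E\leq c\}$ and, in each such sublevel, use Definition~\ref{d:close_conj_pts} to manufacture a negative pseudo-gradient vector field whose flow either confines loops to a neighborhood of the compact set $K_\ell$ associated to the length scale $\ell=\ell(c)$, or shortens them monotonically towards a constant. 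The key geometric input, provided by Rauch comparison together with the no-close-conjugate-points hypothesis, is that any short geodesic loop sitting entirely in $M\setminus K_\ell$ has Morse index zero, so in degrees $d>\dim(M)$ the Morse-theoretic content of the pair $(\Lambda^cM,\Lambda^0M)$ is carried exclusively by the finite family $\{S^1\cdot\gamma_i^m\}$, and passes to the limit as $c\to\infty$.

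The main obstacle will therefore be the construction of this pseudo-gradient flow with the required confinement properties, and the verification that the Morse inequalities for the filtered pairs $(\Lambda^cM,\Lambda^0M)$ survive the limit $c\to\infty$ in the relevant range of degrees. Once this is in place, combining the local estimate $B$ above with the hypothesis that the Betti numbers of $\Lambda M$ are unbounded in degrees larger than $\dim(M)$, which forces $b_d>B$ for infinitely many $d\geq d_0$, produces an immediate contradiction, proving Theorem~\ref{t:multiplicity}.
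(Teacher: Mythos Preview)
Your local analysis (Gromoll--Meyer local homology bound, Bott iteration, separation by average index) matches the paper's Claims~\ref{c:1}--\ref{c:2} and equation~\eqref{e:local_homology_bound}. The divergence is in how you propose to obtain the global Morse inequality without Palais--Smale. You describe building, on each sublevel $\{E\leq c\}$, a pseudo-gradient flow with ad hoc confinement properties and then passing to the limit $c\to\infty$; the paper instead uses the Benci--Giannoni \emph{penalized energy} $E_\alpha(\gamma)=E(\gamma)+f_\alpha(\gamma(0))$, which satisfies Palais--Smale outright, so the Morse inequality for the sublevel $\{E_\alpha<\ell^2\}$ is automatic. The price is the appearance of new critical points of $E_\alpha$---geodesic arcs with a velocity jump at $t=0$, not closed geodesics---whose base points lie in $\mathrm{supp}(f_\alpha)\subset M\setminus K_\ell$; Lemma~\ref{l:Morse_index_bound} shows each of these has $\ind(E_\alpha,\gamma)+\nul(E_\alpha,\gamma)\leq\dim(M)$, hence contributes nothing in degree $d>\dim(M)$. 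Two points in your sketch should be corrected accordingly. First, it is not true that a short geodesic loop in $M\setminus K_\ell$ has free-loop ``Morse index zero'': only the \emph{based}-loop index vanishes (this is the Morse index theorem~\eqref{e:based_Morse_idx}, and no Rauch comparison is invoked), while the free-loop index can be as large as $\dim(M)$; the operative bound is on $\ind+\nul$, not on $\ind$ alone. Second, no limit in $c$ is taken: the paper fixes $N=\lfloor1+2\dim(M)/a\rfloor ru+1$ linearly independent classes $h_1,\dots,h_N\in\Hom_d(\Lambda M)$, chooses $\ell^2>\max_j c_1(h_j)$ once and for all, and works in the single sublevel $\{E_\alpha<\ell^2\}$ for $\alpha$ large, where the classes $h_j$ already survive by~\eqref{e:monotonicity_minimax}.
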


The proofs of Theorems~\ref{t:existence} and~\ref{t:multiplicity} are purely based on techniques from critical point theory, and do not make use of the reversibility $|v|_g=|-v|_{g}$ of the Riemannian norm, unlike other results on closed geodesics (for instance Lusternik-Schnirelmann's theorem on the existence of three closed geodesics without self-intersections on Riemannian 2-spheres, see \cite{Grayson:1989gd, Taimanov:1992fe, Hass:1994pd}). It is well known that, if the reversibility is not exploited, there is no essential difference between Riemannian and Finsler manifolds from the point of view of critical point theory. The only extra difficulty arising in the Finsler case is merely technical: the energy function of closed geodesics is not $C^\infty$ as in the Riemannian case, but is only $C^{1,1}$ (see, e.g., \cite{Mercuri:1977fj, Rademacher:1992sw, Bangert:2010ak}). Such lack of regularity can be circumvented by passing to suitable finite dimensional approximations of the free loop space, as explained for instance in \cite[Section~3]{Bangert:2010ak}. Therefore, with minor cosmetic modifications in the proofs, Theorems~\ref{t:existence} and~\ref{t:multiplicity} also hold if we replace the Riemannian metric $g$ with a Finsler metric $F$ whose geodesic flow is complete (that is, defined for all times).

\subsection*{Acknowledgments} 
M.M. is grateful to Jean-Claude Sikorav for a fruitful conversation about the statement of Theorem~\ref{t:multiplicity}, as well as for many other discussions over the past few years. L.A. is partially supported by the DFG grant AB 360/2-1 ``Periodic orbits of conservative systems below the Ma\~n\'e critical energy value''.
M.M. is partially supported by the ANR projects WKBHJ (ANR-12-BS01-0020) and COSPIN (ANR-13-JS01-0008-01).

\section{Preliminaries}

\subsection{The penalized energy}

Let $(M,g)$ be a complete Riemannian manifold. Its 1-periodic closed geodesics (possibly stationary curves) are precisely the critical points of the energy function 
\begin{align*}
E:\Lambda M:=\W(\R/\Z,M)\to[0,\infty),\qquad
E(\gamma)=\int_0^1 g_{\gamma(t)}(\dot\gamma(t),\dot\gamma(t))\,\diff t.
\end{align*}
Throughout this paper, by ``geodesic'' we will always mean ``geodesic parametrized with constant speed''. We equip the free loop space $\Lambda M$ with the complete Hilbert-Riemannian metric induced by $g$. The function $E$ is smooth, but may not satisfy the Palais-Smale condition with respect to this metric if $M$ is not compact, as there may be Palais-Smale sequences of loops that escape towards the ends of the manifold $M$. Following Benci and Giannoni \cite{Benci:1991kk, Benci:1992lq}, we provide the needed compactness by means of the following  penalization trick. Let $\{\rho_\alpha\,|\,\alpha\in\N\}$ be a partition of unity such that the support of each $\rho_\alpha$ is compact. We define the smooth function
$f_\alpha:M\to[0,\infty)$ by 
\begin{align*}
f_\alpha(x):=\sum_{\beta>\alpha} \beta\cdot\rho_\beta(x).
\end{align*}
Notice that $f_\alpha$ is a non-negative proper function whose support is the union of the compact sets $\mathrm{supp}(\rho_\beta)$, for $\beta>\alpha$. In particular, since the family 
$\{\mathrm{supp}(\rho_\alpha)\,|\,\alpha\in\N\}$ is a locally finite cover of $M$, for every compact subset $K\subset M$ there exists $\alpha_0=\alpha_0(K)\in\N$ such that, for all integers $\alpha\geq\alpha_0$, the support of $f_\alpha$ is disjoint from $K$. For each $\alpha$, we introduce the \textbf{penalized energy}
\begin{align*}
E_\alpha:\Lambda M\to[0,\infty),\qquad
E_\alpha(\gamma)=E(\gamma) + f_\alpha(\gamma(0)),
\end{align*}
which is clearly smooth. For every sequence $\{\gamma_n\in\Lambda M\,|\,n\in\N\}$ such that $\gamma_n(0)$ tends to the ends of the manifold $M$ as $n\to\infty$, we have $f_\alpha(\gamma_n(0))\to\infty$. This is enough to provide the compactness that was lacking for the sublevel sets of $E$, and indeed the functions $E_\alpha$ satisfy the Palais-Smale condition: every sequence $\{\gamma_n\in\Lambda M\ |\ n\in\N\}$ such that $E_\alpha(\gamma_n)$ is uniformly bounded and $\|\diff E(\gamma_n)\|$ is infinitesimal admits a converging subsequence. We refer the reader to \cite[Lemma~4.5]{Benci:1992lq} for a proof of this fact. The critical points of $E_\alpha$ are those curves $\gamma$ that restrict to geodesics on the open interval $(0,1)\subset\R/\Z$ and satisfy $\dot\gamma(0^-)-\dot\gamma(0^+)=\grad f_\alpha(\gamma(0))$, where ``$\grad$'' denotes the $g$-gradient. In particular, every such $\gamma$ is a closed geodesic if and only if $\gamma(0)$ is a critical point of $f_\alpha$, for instance if $\gamma(0)\not\in\mathrm{supp}(f_\alpha)$.

\subsection{Morse indices}\label{ss:Morse_idx}

Let $\gamma$ be a critical point of $E$, that is, a 1-periodic closed geodesic. The Hessian of $E$, seen as a bounded self-adjoint operator on the Hilbert space $\Tan_\gamma(\Lambda M)$, is a compact perturbation of the identity (see, e.g., \cite[Theorem~2.4.2]{Klingenberg:1978so}), and in particular it has pure point spectrum with finitely many negative eigenvalues. The \textbf{Morse index} $\ind(E,\gamma)$ is defined as the number of negative eigenvalues of such Hessian counted with multiplicity, and the \textbf{nullity} $\nul(E,\gamma)$ is defined as the dimension of its kernel\footnote{Since $\nul(E,\gamma)$ is always larger than or equal to $1$, in the literature  the nullity of a closed geodesic is sometimes defined as $\nul(E,\gamma)-1$.}. Everything discussed so far works analogously if we replace $E$ with the penalized energy function $E_\alpha$. The last index that we will need to consider is the \textbf{local homology} 
$\Loc_*(E,S^1\cdot\gamma)$, which is defined as the relative homology group (with coefficients in some field)
\begin{align*}
\Loc_*(E,S^1\cdot\gamma):=\Hom_*\big (\{E<c\}\cup S^1\cdot\gamma,\{E<c\}\big ),
\end{align*}
where $c:=E(\gamma)$, and $S^1\cdot\gamma:=\{\gamma(t+\cdot)\in\Lambda M\,|\,t\in\R/\Z\}$ is the critical circle of $E$ containing $\gamma$. For the background on local homology groups, we refer the reader to \cite{Gromoll:1969jy} or to \cite[Chapter~1]{Chang:1993ng}. Here, we just recall that the local homology $\Loc_d(E,S^1\cdot\gamma)$ is always trivial in degrees $d<\ind(\gamma)$ and $d>\ind(\gamma)+\nul(\gamma)$.

The Morse indices and nullities of $E$ admit a symplectic interpretation as follows. Let $\phi_t$ be the geodesic flow of the Riemannian metric $g$ on the tangent bundle $\Tan M$. The nullity of a closed geodesic $\gamma\in\Lambda M$ is given by
\begin{align}\label{e:nullity}
\nul(E,\gamma)=\dim\ker\big(\diff\phi_1(\gamma(0),\dot\gamma(0))-\mathrm{Id}\big).
\end{align}
We denote by $V:=\ker(\diff\pi)\subset \Tan\Tan M$ the vertical subbundle, where  $\pi:\Tan M\to M$ is the base projection. We recall that two points $(x,v)\in\Tan M$ and $\phi_t(x,v)\in\Tan M$, for some $t>0$, are said to be \textbf{conjugate} when the intersection $\diff\phi_t(x,v)V_{(x,v)}\cap V_{\phi_t(x,v)}$ is non-trivial; if we set $\zeta(s):=\phi_s(x,v)$ for $s\in\R$, we will say that $\zeta(0)$ and $\zeta(t)$ are conjugate along the geodesic arc $\zeta|_{[0,t]}$. 
We denote by $\cp_t(x,v)$ the total multiplicity of points $\zeta(s)$, for $s\in(0,t]$, that are conjugate to $\zeta(0)$ along $\zeta|_{[0,s]}$. Namely,
\begin{align*}
\cp_t(x,v)=\sum_{s\in(0,t]} \dim\big(\diff\phi_s(x,v)V_{(x,v)}\cap V_{\phi_s(x,v)}\big).
\end{align*}
Let $\Omega M=\{\zeta\in\Lambda M\,|\,\zeta(0)=x\}$ be the space of loops based at a fixed point $x\in M$. The critical points of the restricted energy $E|_{\Omega M}$ are precisely the curves $\gamma\in\Omega M$ that are smooth geodesics outside time 0. By the Morse index Theorem for geodesics \cite[Theorem~15.1]{Milnor:1963rf}, we have
\begin{align}
\label{e:based_Morse_idx}
\ind\big (E|_{\Omega M},\gamma\big ) & = 
\lim_{t\to1^-}
\cp_t(\gamma(0),\dot\gamma(0^+)).
\end{align}

\begin{lem}\label{l:Morse_index_bound}
For every critical point $\gamma$ of $E_\alpha$ such that $\cp_1(\gamma(0),\dot\gamma(0^+))=0$, we have $\ind(E_\alpha,\gamma)+\nul(E_\alpha,\gamma)\leq \dim(M)$.
\end{lem}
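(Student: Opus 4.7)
The plan is to compare the Hessian of $E_\alpha$ at $\gamma$ with that of the energy restricted to the based loop space
\[ \Omega M:=\{\zeta\in\Lambda M\,|\,\zeta(0)=\gamma(0)\}, \]
exploiting the fact that the penalization term depends only on $\gamma(0)$. Since $\gamma$ is a critical point of $E_\alpha$, it restricts to a smooth geodesic on $(0,1)$ and hence is a critical point of $E|_{\Omega M}$. I would first use formula \eqref{e:based_Morse_idx} together with the hypothesis $\cp_1(\gamma(0),\dot\gamma(0^+))=0$ to show that both the Morse index and the nullity of $E|_{\Omega M}$ at $\gamma$ vanish. Indeed, the formula gives $\ind(E|_{\Omega M},\gamma)=\lim_{t\to1^-}\cp_t\leq\cp_1=0$, while the nullity is the dimension of the space of Jacobi fields along $\gamma$ vanishing at $0$ and $1$, which equals the multiplicity of $\gamma(1)$ as a conjugate point to $\gamma(0)$ along $\gamma$, hence is again bounded by $\cp_1=0$. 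Thus $\mathrm{Hess}(E|_{\Omega M})(\gamma)$ is positive definite.

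Next, the key observation is that $E_\alpha-E=f_\alpha\circ\mathrm{ev}_0$, where $\mathrm{ev}_0:\Lambda M\to M$ is evaluation at $0$, is constant on $\Omega M$. Consequently, $\mathrm{Hess}(E_\alpha)(\gamma)$ and $\mathrm{Hess}(E|_{\Omega M})(\gamma)$ agree on the closed subspace $\Tan_\gamma(\Omega M)\subset\Tan_\gamma(\Lambda M)$, which is precisely the kernel of the surjective linear map $\mathrm{ev}_0:\Tan_\gamma(\Lambda M)\to\Tan_{\gamma(0)}M$. In particular, the restriction of $\mathrm{Hess}(E_\alpha)(\gamma)$ to $\Tan_\gamma(\Omega M)$ is positive definite.

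Finally, I would let $W\subset\Tan_\gamma(\Lambda M)$ be a maximal subspace on which $\mathrm{Hess}(E_\alpha)(\gamma)$ is negative semidefinite; since $\mathrm{Hess}(E_\alpha)(\gamma)$ is a compact perturbation of the identity, the min-max principle yields $\dim W=\ind(E_\alpha,\gamma)+\nul(E_\alpha,\gamma)$. By the previous step one has $W\cap\Tan_\gamma(\Omega M)=0$, and so the map $\mathrm{ev}_0$ restricts to an injection $W\hookrightarrow\Tan_{\gamma(0)}M$, giving
\[ \ind(E_\alpha,\gamma)+\nul(E_\alpha,\gamma)=\dim W\leq\dim M. \]

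The main obstacle is conceptual rather than computational, namely the identification of $\ind+\nul$ with the dimension of a maximal negative-semidefinite subspace; this is a standard consequence of the min-max characterization of eigenvalues for compact perturbations of the identity, but needs to be invoked carefully so as to replace the usual splitting by negative eigenspaces (which gives only $\ind$ without the nullity) by a splitting that also incorporates the kernel.
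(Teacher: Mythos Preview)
Your argument is correct and takes a somewhat different, more elementary route than the paper. Both proofs begin by observing that the restriction of the Hessian $H$ of $E_\alpha$ to $\Tan_\gamma(\Omega M)$ coincides with the Hessian of $E|_{\Omega M}$ (the penalization being constant on $\Omega M$), and that the hypothesis $\cp_1=0$ forces this restricted form to be positive definite. The divergence is in how one passes from this to the bound on $\ind+\nul$. The paper introduces the $H$-orthogonal complement $\Tan_\gamma(\Omega M)^\bot$, identifies it explicitly as the space of continuous periodic Jacobi fields along $\gamma$, uses the no-conjugate-points hypothesis a second time to bound its dimension by $\dim(M)$, and then invokes an index additivity formula $\ind(H)+\nul(H)=\ind(H|_{\Tan_\gamma(\Omega M)})+\ind(H|_{\bot})+\nul(H|_{\bot})$. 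You instead take a maximal negative-semidefinite subspace $W$, note that $W\cap\Tan_\gamma(\Omega M)=0$ by positive definiteness on the latter, and conclude that $\mathrm{ev}_0$ injects $W$ into $\Tan_{\gamma(0)}M$. Your route is shorter and avoids both the Jacobi-field description of the orthogonal complement and the quoted index formula; the paper's route, on the other hand, yields the extra structural information that $\Tan_\gamma(\Omega M)^\bot$ has dimension at most $\dim(M)$, which may be of independent interest. One small point worth making explicit in your write-up: the Hessian of $E_\alpha$ is still a compact perturbation of the identity because the added term $f_\alpha\circ\mathrm{ev}_0$ contributes a finite-rank operator; this justifies the spectral picture underlying the equality $\dim W=\ind+\nul$.
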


\begin{proof}
We denote by $H$ the second derivative of the penalized action $E_\alpha$ at the critical point $\gamma$, which is a bounded symmetric bilinear form on the Hilbert space $\Tan_\gamma(\Lambda M)$. A straightforward computation shows that
$$H(\xi,\eta)
=
\int_0^1
\Big[ g_{\gamma(t)}\big (\nabla\xi(t),\nabla\eta(t)\big ) - g_{\gamma(t)}\big (R(\xi(t),\dot\gamma(t))\dot\gamma(t),\eta(t)\big ) \Big]\diff t + h(\xi(0),\eta(0)),$$
where $\nabla$ denotes the $g$-covariant derivative, $R$ the $g$-Riemann tensor, and $h$ the $g$-Riemannian Hessian of $f_\alpha$ at $\gamma(0)$, that is, the symmetric bilinear form on $\Tan_{\gamma(0)}M$ uniquely defined by
\begin{align*}
h(v,v)=\frac{\diff^2}{\diff s^2}\bigg|_{s=0} f_\alpha(\exp_{\gamma(0)}(s v)). 
\end{align*}
Since $\gamma$ is a smooth geodesic outside time 0, it is a critical point of the restricted energy $E|_{\Omega M}$. Consider the tangent space $\Tan_\gamma(\Omega M)$, which is the subspace of $\Tan_\gamma(\Lambda M)$ given by the $\W$-vector fields $\xi$ along $\gamma$ such that $\xi(0)=\xi(1)=0$. We denote by $\Tan_\gamma(\Omega M)^\bot$ the orthogonal to $\Tan_\gamma(\Omega M)$ with respect to the bilinear form $H$, that is,
\begin{align*}
\Tan_\gamma(\Omega M)^\bot
=
\Big\{
\xi\in\Tan_\gamma(\Lambda M)\ \Big|\ H(\xi,\eta)=0,\quad \forall \eta\in\Tan_\gamma(\Omega M)
\Big\}.
\end{align*}
A standard bootstrapping argument, together with an integration by parts, implies that $\Tan_\gamma(\Omega M)^\bot$ is the space of continuous 1-periodic vector fields $\xi$ along $\gamma$ that are smooth Jacobi fields outside time 0. This means that
\begin{align*}
\diff\phi_t(\xi(0),\nabla\xi(0^+)) & =(\xi(t),\nabla\xi(t)),\qquad\forall t\in(0,1),\\
\xi(0) & = \xi(1).
\end{align*}
If $\xi_1$ and $\xi_2$ are two such Jacobi fields such that $\xi_1(0)=\xi_2(0)$, their difference $\eta=\xi_1-\xi_2$ is a Jacobi field such that $\eta(0)=\eta(1)=0$. The assumption $\cp_1(\gamma(0),\dot\gamma(0^+))=0$ forces $\eta$ to vanish identically. This shows that
\begin{align}
\label{e:dimension_orthogonal}
\dim\big (\Tan_\gamma(\Omega M)^\bot\big )\leq \dim(M).
\end{align}
By definition, the Morse index $\ind(E_\alpha,\gamma)$ and the nullity $\nul(E_\alpha,\gamma)$ are respectively equal to the negative inertia index $\ind(H)$ and to the nullity $\nul(H)$ of the symmetric bilinear form $H$. Similarly, $\ind(E_\alpha|_{\Omega M},\gamma)$ and $\nul(E_\alpha|_{\Omega M},\gamma)$ are respectively equal to $\ind(H|_{\Tan_\gamma(\Omega M)\times\Tan_\gamma(\Omega M)})$ and to $\nul(H|_{\Tan_\gamma(\Omega M)\times\Tan_\gamma(\Omega M)})$. 
By a standard linear algebra formula for the inertia indices of restricted quadratic forms (see, e.g., \cite[Propositions~A.2--A.3]{Mazzucchelli:2015zc}), we have 
\begin{align*}
\ind(H) + \nul(H)  =\ & \ind\big (H|_{\Tan_\gamma(\Omega M)\times\Tan_\gamma(\Omega M)}\big ) + \ind\big (H|_{\Tan_\gamma(\Omega M)^\bot\times\Tan_\gamma(\Omega M)^\bot}\big )\\
& + \nul\big (H|_{\Tan_\gamma(\Omega M)^\bot\times\Tan_\gamma(\Omega M)^\bot}\big ). 
\end{align*}
Since $\cp_1(\gamma(0),\dot\gamma(0^+))=0$, equation~\eqref{e:based_Morse_idx} implies
\begin{align*}
\ind\big (H|_{\Tan_\gamma(\Omega M)\times\Tan_\gamma(\Omega M)}\big )
&=\cp_1(\gamma(0),\dot\gamma(0^+))=0.
\end{align*}
Finally, by equation~\eqref{e:dimension_orthogonal}, we have
\begin{align*}
\ind\big (H|_{\Tan_\gamma(\Omega M)^\bot\times\Tan_\gamma(\Omega M)^\bot}\big )
+ \nul\big (H|_{\Tan_\gamma(\Omega M)^\bot\times\Tan_\gamma(\Omega M)^\bot}\big )
& \leq
\dim\big (\Tan_\gamma(\Omega M)^\bot\big )\\
& \leq
\dim(M).
\qedhere
\end{align*}
\end{proof}

We recall that the sectional curvature of a Riemannian manifold $(M,g)$ is the function $\kappa$ defined on the tangent two planes in $\Tan M$ as follows: for every linearly independent $v,w\in\Tan_x M$, we have
\begin{align*}
 \kappa_x\big (\mathrm{span}\{v,w\}\big ) = \frac{ g_x(R(v,w)w,v) }{ g_x(v,v)\,g_x(w,w) - g_x(v,w)^2},
\end{align*}
where $R$ is the Riemann tensor of $(M,g)$. We close this subsection by showing that Benci-Giannoni's condition on the sectional curvature at infinity implies our condition in Definition~\ref{d:close_conj_pts}.

\begin{prop}\label{p:sectional_curvature}
Let $(M,g)$ be a complete Riemannian manifold such that, for all non-compact sequences $\{x_n\, |\, n\in\N\}\subset M$, we have
\begin{align*}
\limsup_{n\to\infty} \kappa_{x_n} \leq 0.
\end{align*}
Then $(M,g)$ is without close conjugate points at infinity.
\end{prop}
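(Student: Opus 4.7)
The plan is to combine a straightforward reformulation of the curvature hypothesis with the classical Rauch comparison theorem. First I would upgrade the $\limsup$ assumption to a uniform quantitative bound: for every $\varepsilon>0$ there exists a compact set $K(\varepsilon)\subset M$ such that $\kappa_x(P)\leq\varepsilon$ for all $x\in M\setminus K(\varepsilon)$ and all two-planes $P\subset\Tan_x M$. This follows by contradiction: if the statement failed for some $\varepsilon_0>0$, one could extract a sequence $\{x_n\}$ leaving every compact subset of $M$ together with two-planes $P_n\subset\Tan_{x_n}M$ satisfying $\kappa_{x_n}(P_n)>\varepsilon_0$, directly contradicting the hypothesis that $\limsup_{n\to\infty}\kappa_{x_n}\leq 0$.

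Given $\ell>0$, I would then pick $\varepsilon>0$ small enough that $\pi/\sqrt{\varepsilon}>\ell$ and set $K_\ell:=K(\varepsilon)$. For any geodesic $\gamma:[0,1]\to M\setminus K_\ell$ of length at most $\ell$, the sectional curvatures of $(M,g)$ along $\gamma$ are bounded above by $\varepsilon$. Comparing $\gamma$ with the model space of constant sectional curvature $\varepsilon$, the Rauch comparison theorem implies that the first point conjugate to $\gamma(0)$ along $\gamma$, if any, occurs at arclength at least $\pi/\sqrt{\varepsilon}$, which by the choice of $\varepsilon$ exceeds $\ell$ and in particular exceeds the total length of $\gamma$. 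Hence $\gamma(0)$ has no conjugate points along $\gamma$, which is exactly the condition required by Definition~\ref{d:close_conj_pts}.

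I do not anticipate any substantial obstacle here. The Rauch comparison theorem supplies precisely the curvature-controlled conjugate radius needed, and Step~1 is a routine unwinding of the hypothesis. The only minor care concerns how to read $\limsup_{n\to\infty}\kappa_{x_n}\leq 0$, since $\kappa_{x_n}$ is a function on the Grassmannian of two-planes rather than a number; the natural reading is that for every escaping sequence $\{x_n\}$ and every choice of two-planes $P_n\subset\Tan_{x_n}M$ one has $\limsup_{n\to\infty}\kappa_{x_n}(P_n)\leq 0$, and this is exactly what is used to produce the uniform bound above.
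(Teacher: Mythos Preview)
Your proposal is correct and follows essentially the same approach as the paper: one extracts from the curvature hypothesis a compact set outside of which the sectional curvature is bounded above by $(\pi/\ell)^2$, and then invokes the Rauch comparison theorem to rule out conjugate points along geodesics of length at most $\ell$. The paper's proof is more terse, taking the existence of such a compact set as immediate, whereas you spell out the contradiction argument explicitly; your remark on the interpretation of $\kappa_{x_n}$ is a helpful clarification.
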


\begin{proof}
Consider a positive length $\ell>0$, and let $K_\ell\subset M$ be a large enough compact subset such that $\kappa_x < (\pi/\ell)^2$ for all $x\in M\setminus K_\ell$. As a direct consequence of Rauch comparison theorem (see \cite[Proposition~2.4]{Carmo:1992jk}), any geodesic $\gamma:[0,1]\to M\setminus K_\ell$ of length smaller than or equal to $\ell$ does not contain a pair of conjugate points.
\end{proof}

\subsection{Iterated closed geodesics}

The $m$-th iterate of $\gamma\in\Lambda M$ is the curve $\gamma^m\in\Lambda M$ given by $\gamma^m(t)=\gamma(mt)$. If $\gamma$ is a closed geodesic, and thus a critical point of $E$, $\gamma^m$ is a critical point of $E$ as well. A classical result of Bott \cite{Bott:1956sp} (see also \cite{Long:2002ed, Mazzucchelli:2015zc} for more modern accounts) implies that the Morse indices $\ind(E,\gamma^m)$ satisfy the iteration inequalities
\begin{align}\label{e:iteration_inequalities}
m\,\avind(\gamma)-\dim(M) \leq \ind(E,\gamma^m) \leq m\,\avind(\gamma)+\dim(M) - \nul(E,\gamma^m),
\end{align}
where the non-negative real number $\avind(\gamma)$ is the \textbf{average index} of $\gamma$, defined by 
\begin{align*}
\avind(\gamma) = \lim_{m\to\infty} \frac{\ind(E,\gamma^m)}{m}.
\end{align*}
As for the nullity, equation~\eqref{e:nullity} together with an argument from linear algebra (see, e.g., \cite[Proposition~A.1]{Mazzucchelli:2015zc}) implies that there exists a partition $\N=\N_1\cup...\cup\N_k$ such that, for each $i=1,...,k$, every integer $m\in\N_i$ is a multiple of $m_i:=\min\N_i$, and 
\begin{align*}
\nul(E,\gamma^m)=\nul(E,\gamma^{m_i}),\qquad\forall m\in\N_i.
\end{align*}
A result of Gromoll and Meyer \cite[Theorem~3]{Gromoll:1969gh} implies, for all $m\in\N_i$, the local homology $\Loc_{*}(E,\gamma^m)$ is isomorphic to $\Loc_{*}(E,\gamma^{m_i})$ up to a shift in the degree, and in particular
\begin{align}\label{e:Gromoll_Meyer}
\dim\big(\Loc_{*}(E,\gamma^m)\big)=\dim\big(\Loc_{*}(E,\gamma^{m_i})\big),\qquad\forall m\in\N_i.
\end{align}

\section{Proofs of the Theorems}

\subsection{Proof of Theorem~\ref{t:existence}}\label{ss:existence}
Our argument for the proof of Theorem~\ref{t:existence} follows closely the one in \cite{Benci:1992lq}, but employs the sharper Morse index bound given by Lemma~\ref{l:Morse_index_bound}. For each $\alpha\in\N$, we define the minimax function
\begin{equation}\label{e:minimax}
\begin{gathered}
 c_\alpha:\Hom_*(\Lambda M)\setminus\{0\}\to[0,\infty),
\\
c_\alpha(h):=\inf\Big\{ \ell^2\geq 0\ \Big|\ h\in \iota^\ell_*\Hom_*(\{E_\alpha<\ell^2\})\Big\},
\end{gathered}
\end{equation}
where $\iota^\ell:\{E_\alpha<\ell^2\}\hookrightarrow\Lambda M$ denotes the sublevel set inclusion. Since the penalized energy $E_\alpha$ satisfies the Palais-Smale condition, the Minimax Lemma \cite[page~79]{Hofer:1994bq} guarantees that $c_\alpha(h)$ is a critical value of $E_\alpha$, that is, the squared length of a (not necessarily smoothly periodic) geodesic that closes up in time 1. Since $E_\alpha\geq E_{\alpha+1}$ pointwise, we have
\begin{align}\label{e:monotonicity_minimax}
 c_\alpha(h)\geq c_{\alpha+1}(h),\qquad\forall\alpha\in\N,\ h\in\Hom_*(\Lambda M)\setminus\{0\}.
\end{align}
By assumption, there exist a degree $d>\dim(M)$ and a non-zero homology class $h\in\Hom_d(\Lambda M)$. We fix $\ell>c_1(h)^{1/2}$, and an integer $\alpha$ large enough so that
\begin{align}\label{e:shell}
 \dist(x,y)>\ell,\qquad\forall x\in \mathrm{supp}(f_{\alpha}),\ y\in K_\ell,
\end{align}
where $K_\ell\subset M$ is the compact subset given by Definition~\ref{d:close_conj_pts}. Notice that this is possible since the Riemannian manifold $(M,g)$ is complete. Equation~\eqref{e:monotonicity_minimax} implies that $c_\alpha(h)\leq c_1(h)\leq \ell^2$, and therefore the homology class $h$ belongs to the image of the inclusion-induced homomorphism $\iota^\ell_*:\Hom_d(\{E_\alpha<\ell^2\})\to\Hom_d(\Lambda M)$. In particular,
\begin{align}\label{e:nontrivial_homology}
\Hom_d\big (\{E_\alpha<\ell^2\}\big )\neq0.
\end{align}
By equation~\eqref{e:shell}, for each critical point $\gamma$ of $E_\alpha$ with $E_\alpha(\gamma)<\ell^2$, exactly one of the following cases is verified:
\begin{itemize}
\item[(i)] $\gamma(0)\in M\setminus \mathrm{supp}(f_\alpha)$, and therefore $\gamma$ is a critical point of $E$,
\item[(ii)] $\gamma(0)\in \mathrm{supp}(f_\alpha)$, and therefore the curve $\gamma$ is entirely contained in $M\setminus K_\ell$.
\end{itemize}
Assume by contradiction that every such critical point $\gamma$ satisfies (ii). Since $\gamma$ is contained in $M\setminus K_\ell$ and its length is less than $\ell$, we have $\cp_1(\gamma(0),\dot\gamma(0^+))=0$. Lemma~\ref{l:Morse_index_bound} implies that $\ind(E_\alpha,\gamma)+\nul(E_\alpha,\gamma)\leq \dim(M)$. In particular, since $d$ is larger than $\dim(M)$, no such critical point of $E_\alpha$ ``contributes'' to the homology group $\Hom_d(\{E_\alpha<\ell^2\})$. Therefore $\Hom_d(\{E_\alpha<\ell^2\})$ must be trivial, in contradiction with~\eqref{e:nontrivial_homology}.

\subsection{Proof of Theorem~\ref{t:multiplicity}}
We prove Theorem~\ref{t:multiplicity} by contradiction, assuming that the only non-iterated closed geodesics of $(M,g)$ are $\gamma_1,...,\gamma_r$. Namely, each critical point of the energy $E$ with positive critical value belongs to a critical circle of the form $S^1\cdot\gamma_i^m$ for some $i\in\{1,...,r\}$ and $m\in\N$. By Gromoll and Meyer's equation~\eqref{e:Gromoll_Meyer}, there exists a constant $u\in\N$ such that the local homology of each critical circle $S^1\cdot\gamma_i^m$ has dimension at most $u$, i.e.
\begin{align}\label{e:local_homology_bound}
\dim\big( \Loc_*(E,S^1\cdot\gamma_i^m) \big) \leq u,
\qquad
\forall i\in\{1,...,r\},\ m\in\N.
\end{align}
We denote by $a>0$ the minimum among the positive average indices of the closed geodesics $\gamma_1,...,\gamma_r$, i.e.
\begin{align*}
a:=\min\Big( \left\{\avind(\gamma_1),...,\avind(\gamma_r)\right\}\cap (0,\infty) \Big). 
\end{align*}

\begin{claim}\label{c:1}
For every degree $d>\dim(M)$, there are at most $\lfloor1+2\dim(M)/a\rfloor r$ critical circles of $E$ having non-trivial local homology in degree $d$.
\end{claim}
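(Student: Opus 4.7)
\subsection*{Proof plan for Claim~\ref{c:1}}

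The plan is to combine the Bott iteration inequalities \eqref{e:iteration_inequalities} with the standard support range for local homology of a critical circle (local homology vanishes outside $[\mathrm{ind}(E,\gamma^m),\,\mathrm{ind}(E,\gamma^m)+\mathrm{nul}(E,\gamma^m)]$) to bound, for each base geodesic $\gamma_i$, the number of iterates $m$ whose critical circle $S^1\cdot\gamma_i^m$ can carry nonzero local homology in a given degree $d>\dim(M)$.

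First, I would dispose of the iterates with $\avind(\gamma_i)=0$. For such $\gamma_i$, the right half of \eqref{e:iteration_inequalities} gives $\ind(E,\gamma_i^m)+\nul(E,\gamma_i^m)\leq \dim(M)<d$, so $\Loc_d(E,S^1\cdot\gamma_i^m)=0$ for every $m\in\N$. Hence only the prime closed geodesics $\gamma_i$ with $\avind(\gamma_i)\geq a>0$ can contribute to the count.

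Next, fix such a $\gamma_i$ with $\avind(\gamma_i)\geq a$, and suppose $\Loc_d(E,S^1\cdot\gamma_i^m)\neq 0$. Then $\ind(E,\gamma_i^m)\leq d\leq \ind(E,\gamma_i^m)+\nul(E,\gamma_i^m)$. Plugging this into the two sides of \eqref{e:iteration_inequalities} I get
\begin{align*}
m\,\avind(\gamma_i)-\dim(M) \ \leq\ \ind(E,\gamma_i^m) \ \leq\ d,\\
d \ \leq\ \ind(E,\gamma_i^m)+\nul(E,\gamma_i^m) \ \leq\ m\,\avind(\gamma_i)+\dim(M),
\end{align*}
so that $m\,\avind(\gamma_i)\in[d-\dim(M),\,d+\dim(M)]$, an interval of length $2\dim(M)$. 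The number of integers $m$ with $m\,\avind(\gamma_i)$ in this interval is at most $\lfloor 2\dim(M)/\avind(\gamma_i)\rfloor+1\leq \lfloor 1+2\dim(M)/a\rfloor$, since $\avind(\gamma_i)\geq a$.

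Finally, summing over the (at most) $r$ prime closed geodesics yields at most $\lfloor 1+2\dim(M)/a\rfloor\,r$ critical circles with $\Loc_d\neq 0$. The only point that requires a touch of care is the degenerate case $\avind(\gamma_i)=0$, which would otherwise allow unboundedly many iterates to lie in the admissible window; but as noted above, the nullity bound $\nul(E,\gamma_i^m)\leq\dim(M)$ that is built into \eqref{e:iteration_inequalities} rules out contributions in degrees $d>\dim(M)$. No serious obstacle is expected beyond keeping track of this dichotomy.
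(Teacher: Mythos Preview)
Your proposal is correct and follows essentially the same approach as the paper: you dispose of the $\avind(\gamma_i)=0$ case via the right-hand iteration inequality, then for $\avind(\gamma_i)>0$ you use both halves of \eqref{e:iteration_inequalities} to confine $m\,\avind(\gamma_i)$ to an interval of length $2\dim(M)$ and count the admissible integers. The paper simply says ``this readily follows from the iteration inequalities'' where you spell out the interval bound, but the argument is the same.
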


\begin{proof}
We recall that the local homology of a critical circle $S^1\cdot\gamma$ is trivial in all degrees outside the interval $[\ind(E,\gamma),\ind(E,\gamma)+\nul(E,\gamma)]$. By the rightmost iteration inequality in~\eqref{e:iteration_inequalities}, if $\avind(\gamma)=0$ we have $\ind(\gamma)+\nul(\gamma)\leq\dim(M)$, and in particular the local homology $\Loc_d(E,S^1\cdot\gamma)$ is trivial. Therefore it is enough to prove that, for each  $\gamma_i$ such that $\avind(\gamma_i)>0$, there are at most $\lfloor1+2\dim(M)/\avind(\gamma_i)\rfloor$ integers $m\in\N$ such that the interval $[\ind(E,\gamma_i^m),\ind(E,\gamma_i^m)+\nul(E,\gamma_i^m)]$ contains $d$. This readily follows from the iteration inequalities~\eqref{e:iteration_inequalities}.
\end{proof}

\begin{claim}\label{c:2}
For all $\ell>0$ there exists $\overline\alpha=\overline\alpha(\ell)\in\N$ such that, for all integers $\alpha>\overline\alpha$ and $d>\dim(M)$, we have
\begin{align*}
\dim\big(\Hom_d(\{E_\alpha<\ell^2\})\big) \leq \lfloor1+2\dim(M)/a\rfloor ru.
\end{align*}
Here, $\Hom_*$ denotes the singular homology functor with coefficients in any field.
\end{claim}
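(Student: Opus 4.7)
The plan is to apply Morse theory to the penalized energy $E_\alpha$ on the sublevel set $\{E_\alpha<\ell^2\}$, splitting its critical orbits into three groups and showing that, in degrees $d>\dim(M)$, only the orbits coming from iterates of $\gamma_1,\dots,\gamma_r$ contribute to the homology. The overall bound will then follow by combining Claim~\ref{c:1} with equation~\eqref{e:Gromoll_Meyer}.

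First I would choose $\overline\alpha=\overline\alpha(\ell)$. Since every $E(\gamma_i)>0$, only finitely many pairs $(i,m)$ satisfy $m^2 E(\gamma_i)<\ell^2$, and the images of the corresponding iterates $\gamma_i^m$ are all contained in the compact set $K:=K_\ell\cup\bigcup_{i=1}^r\gamma_i([0,1])$. The local finiteness of the partition of unity $\{\rho_\beta\}$ allows me to pick $\overline\alpha\in\N$ large enough so that $\mathrm{supp}(f_\alpha)\cap K=\emptyset$ for every $\alpha>\overline\alpha$; since $K$ is compact and $\mathrm{supp}(f_\alpha)$ is closed, the two sets then lie at positive distance from each other.

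Next I would classify the critical orbits $S^1\cdot\gamma$ of $E_\alpha$ contained in $\{E_\alpha<\ell^2\}$ into three types. \textbf{(a)} If $\gamma$ is constant, then $\gamma(0)$ is a critical point of $f_\alpha$; the Hessian formula in the proof of Lemma~\ref{l:Morse_index_bound} (with vanishing curvature term because $\dot\gamma\equiv 0$) is non-negative on $\Tan_\gamma(\Omega M)$ and reduces to the Hessian of $f_\alpha$ on the $\dim(M)$-dimensional complement of constant vector fields, yielding $\ind(E_\alpha,\gamma)+\nul(E_\alpha,\gamma)\leq\dim(M)$. \textbf{(b)} If $\gamma$ is non-constant and $\gamma(0)\notin\mathrm{supp}(f_\alpha)$, the critical-point equation $\dot\gamma(0^-)-\dot\gamma(0^+)=\grad f_\alpha(\gamma(0))=0$ forces $\gamma$ to be a genuine closed geodesic, hence $S^1\cdot\gamma=S^1\cdot\gamma_i^m$ for some $(i,m)$ with $m^2 E(\gamma_i)<\ell^2$; since the orbit lies in $K\setminus\mathrm{supp}(f_\alpha)$, $E_\alpha$ agrees with $E$ on an entire $\W$-neighbourhood of it, so $\Loc_*(E_\alpha,S^1\cdot\gamma)=\Loc_*(E,S^1\cdot\gamma_i^m)$. \textbf{(c)} If $\gamma$ is non-constant with $\gamma(0)\in\mathrm{supp}(f_\alpha)$, the entire curve lies in $M\setminus K_\ell$ and has length less than $\ell$, so $\cp_1(\gamma(0),\dot\gamma(0^+))=0$ and Lemma~\ref{l:Morse_index_bound} gives $\ind(E_\alpha,\gamma)+\nul(E_\alpha,\gamma)\leq\dim(M)$.

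For $d>\dim(M)$, the local homology $\Loc_d(E_\alpha,S^1\cdot\gamma)$ vanishes on every orbit of types (a) and (c). Since $E_\alpha$ is bounded below and satisfies the Palais--Smale condition, the standard Morse inequalities then yield
\[ \dim \Hom_d\big(\{E_\alpha<\ell^2\}\big) \;\leq\; \sum_{(i,m):\, m^2 E(\gamma_i)<\ell^2} \dim \Loc_d\big(E,S^1\cdot\gamma_i^m\big). \]
Claim~\ref{c:1} bounds the number of non-zero summands by $\lfloor 1+2\dim(M)/a\rfloor r$, while~\eqref{e:Gromoll_Meyer} bounds each summand by $u$, yielding the desired inequality. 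The delicate point I expect is the rigorous justification of the displayed Morse inequality: the type-(c) critical orbits may be infinite in number and are not a priori isolated, but since they contribute nothing in the relevant degree range, the bound can be recovered either by an equivariant/Morse--Bott treatment of the sublevel set or by a deformation argument that isolates only the critical values at which type-(b) orbits sit.
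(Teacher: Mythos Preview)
Your approach matches the paper's, but there is one concrete gap in the choice of $\overline\alpha$. You arrange $\mathrm{supp}(f_\alpha)\cap K=\emptyset$ and observe that the two sets lie at \emph{positive} distance; yet for type~(c) you then assert that the entire curve lies in $M\setminus K_\ell$. Since a type-(c) curve has length less than $\ell$ and base point $\gamma(0)\in\mathrm{supp}(f_\alpha)$, that conclusion requires $\dist\big(\mathrm{supp}(f_\alpha),K_\ell\big)>\ell$, not merely $>0$. The paper makes exactly this stronger choice, demanding $\dist(x,y)>\ell$ for all $x\in\mathrm{supp}(f_{\overline\alpha})$ and $y\in K_\ell$; on your side the fix is to replace $K$ by its closed $\ell$-neighborhood (still compact, by completeness of $(M,g)$) before choosing $\overline\alpha$.

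Apart from this, your argument is the paper's, with the harmless refinement that you separate out constant critical points as a distinct type~(a); the paper simply absorbs them into its cases (i) and (ii). The concern you flag at the end about possibly non-isolated type-(c) orbits is equally present in the paper's proof and is handled the same way: since every such critical point satisfies $\ind(E_\alpha,\gamma)+\nul(E_\alpha,\gamma)\leq\dim(M)$, crossing those critical values does not affect $\Hom_d$ for $d>\dim(M)$, and only the type-(b) orbits enter the Morse inequality.
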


\begin{proof}
We proceed as in the last paragraph of Section~\ref{ss:existence}. We choose the integer $\overline\alpha$ large enough so that $\dist(x,y)>\ell$ for all $x\in \mathrm{supp}(f_{\overline\alpha})$ and $y\in K_\ell$. We fix $\alpha>\overline\alpha$. Since $\mathrm{supp}(f_\alpha)\subset\mathrm{supp}(f_{\overline\alpha})$, for each critical point $\gamma$ of $E_\alpha$ with $E_\alpha(\gamma)<\ell^2$, either point (i) or point (ii) of Section~\ref{ss:existence} is verified. If $\gamma$ satisfies point~(ii) we have $\cp_1(\gamma(0),\dot\gamma(0^+))=0$, and therefore Lemma~\ref{l:Morse_index_bound} implies that $\ind(E_\alpha,\gamma)+\nul(E_\alpha,\gamma)\leq \dim(M)$; in particular, since $d>\dim(M)$, such critical point $\gamma$ does not ``contribute'' to the homology group $\Hom_d(\{E_\alpha<\ell^2\})$. Therefore, we have the Morse inequality 
\begin{align*}
\dim\big( \Hom_d(\{E_\alpha<\ell^2\}) \big)
\leq
\sum_{i=1}^r \sum_{m\in\N}
\dim\big( \Loc_d(E,S^1\cdot\gamma_i^m) \big).
\end{align*}
By Claim~1, for each $i$ there are at most $\lfloor1+2\dim(M)/a\rfloor r$ values of $m$ such that the term of the above sum is non-zero. This, together with~\eqref{e:local_homology_bound}, implies Claim~\ref{c:2}.
\end{proof}

From now on, let us work in a coefficient field $\F$ for the singular homology $\Hom_*$ such that 
\begin{align*}
\sup_{d>\dim(M)} \dim\big(\Hom_d(\Lambda M)\big)=\infty.
\end{align*}
In particular, there exists a degree $d>\dim(M)$ such that 
\begin{align*}
\dim\big(\Hom_d(\Lambda M)\big)>\lfloor1+2\dim(M)/a\rfloor ru.
\end{align*}
We choose arbitrary homology classes 
\[h_1,...,h_{\lfloor1+2\dim(M)/a\rfloor ru+1}\in\Hom_d(\Lambda M)\] 
that are linearly independent. We consider the minimax functions defined in~\eqref{e:minimax}, and fix a real number $\ell$ such that
\begin{align*}
\ell^2 > \max\big\{c_1(h_1),...,c_1(h_{\lfloor1+2\dim(M)/a\rfloor ru+1})\big\}.
\end{align*}
By~\eqref{e:monotonicity_minimax}, we have
\begin{align*}
\ell^2 > \max\big\{c_\alpha(h_1),...,c_\alpha(h_{\lfloor1+2\dim(M)/a\rfloor ru+1})\big\},\qquad \forall\alpha\in\N.
\end{align*}
In particular, for all $\alpha\in\N$, each homology class $h_i$ belongs to the image of the inclusion-induced homomorphism $\iota^\ell_*:\Hom_d(\{E_\alpha<\ell^2\})\to\Hom_d(\Lambda M)$, and therefore
\begin{align*}
\dim \big(\Hom_d(\{E_\alpha<\ell^2\}) \big)\geq \lfloor1+2\dim(M)/a\rfloor ru + 1,\qquad\forall\alpha\in\N.
\end{align*}
This contradicts Claim~\ref{c:2}, and therefore completes the proof of Theorem~\ref{t:multiplicity}.

\bibliography{_biblio}
\bibliographystyle{amsalpha}

\end{document}